\documentclass[11pt]{article}

\usepackage[T1]{fontenc}
\usepackage{lmodern}
\usepackage[margin=1.12in]{geometry}
\usepackage{amsmath,amssymb,amsthm,mathtools}
\usepackage{microtype}
\usepackage[hidelinks]{hyperref}
\hypersetup{
  pdftitle={Counterexamples to the xz-Conjecture and the Mathieu Conjecture for SU(2)},
  pdfauthor={Christopher D. Long},
  pdfsubject={Explicit moment counterexamples for the xz-conjecture and SU(2)},
  pdfkeywords={Mathieu conjecture, SU(2), xz-conjecture, Laurent polynomial, Haar integral, Jacobian Conjecture}
}
\newcommand{\C}{\mathbb C}
\newcommand{\T}{\mathbb T}
\newcommand{\CT}{\operatorname{CT}}
\newcommand{\Sp}{\operatorname{Sp}}
\newcommand{\I}{\mathcal I}

\newtheorem{theorem}{Theorem}[section]
\newtheorem{proposition}[theorem]{Proposition}
\newtheorem{corollary}[theorem]{Corollary}
\newtheorem{lemma}[theorem]{Lemma}
\theoremstyle{remark}
\newtheorem{remark}[theorem]{Remark}

\title{Counterexamples to the $xz$-Conjecture and the Mathieu Conjecture for $SU(2)$}
\author{Christopher D. Long\\\texttt{galizur@gmail.com}}
\date{July 21, 2026}

\begin{document}
\maketitle

\begin{abstract}
Let
\[
  \I(h)=\int_0^1\int_{\T}h(x,z)\,\frac{dz}{2\pi iz}\,dx
  \qquad
  \bigl(h\in\C[x,z,z^{-1}]\bigr).
\]
We give the three-term Laurent polynomial
\[
  f(x,z)=(1-z^{-1})\bigl((1-x)+xz\bigr)
\]
for which
\[
  \I(f^n)=0,
  \qquad
  \I(z^{-1}f^n)=\frac{(-1)^{n-1}}{n+1}\neq0
  \qquad(n\geq1).
\]
Since $\Sp(f)=\{-1,0,1\}$, this disproves the $xz$-conjecture already with one interval variable and one torus variable, and it also shows that $\ker\I$ is not a Mathieu--Zhao subspace.  Padding gives counterexamples to every mixed case of the $xz$-conjecture.  Writing the coordinate functions on $SU(2)$ as
\[
  g=\begin{pmatrix}a&c\\ b&d\end{pmatrix},
\]
the same example lifts, through the integration formula of M\"uger and Tuset, to the regular functions
\[
  F=(1+c)(ad+b),\qquad G=-c,
\]
which satisfy
\[
  \int_{SU(2)}F^n\,dg=0,
  \qquad
  \int_{SU(2)}F^nG\,dg=\frac{(-1)^{n-1}}{n+1}\neq0
\]
for every $n\geq1$.  Thus the Mathieu conjecture for $SU(2)$ is false.
\end{abstract}

\medskip
\noindent\textbf{2020 Mathematics Subject Classification.}
Primary 22E46; Secondary 43A05, 13A50, 14R15.

\noindent\textbf{Keywords.}
Mathieu conjecture, $SU(2)$, $xz$-conjecture, Laurent polynomial, Haar integral, Mathieu--Zhao subspace, Jacobian Conjecture.

\section{Introduction}

Let $K$ be a compact connected Lie group with normalized Haar measure $dg$.  Mathieu conjectured that if $f$ and $g$ are complex-valued $K$-finite functions and
\[
  \int_K f(k)^n\,dk=0\qquad(n\geq1),
\]
then
\[
  \int_K f(k)^n g(k)\,dk=0
\]
for all sufficiently large $n$ \cite{Mathieu}.  Duistermaat and van der Kallen proved the conjecture for compact connected abelian groups \cite{DvK}.  Dings and Koelink studied the first nonabelian case $K=SU(2)$, proving the conjecture for several restricted classes and reducing the general problem to a support conjecture for matrix coefficients \cite{DingsKoelink}.

M\"uger and Tuset subsequently gave an abelian reduction of the $SU(2)$ problem \cite{MugerTuset}.  For integers $k,l\geq0$, an admissible function is a polynomial
\[
  h\in
  \C[x_1,\dots,x_l,z_1,z_1^{-1},\dots,z_k,z_k^{-1}],
\]
written as
\[
  h(x,z)=\sum_{m\in\mathbb Z^k}c_m(x)z^m,
  \qquad c_m\in\C[x_1,\dots,x_l].
\]
Its torus spectrum is
\[
  \Sp(h)=\{m\in\mathbb Z^k:c_m\neq0\}.
\]
The $xz$-conjecture asserts that
\begin{equation}\label{eq:xz-conjecture}
  \int_{[0,1]^l}\int_{\T^k}h(x,z)^n\,d^\times z\,dx=0
  \quad(n\geq1)
  \quad\Longrightarrow\quad
  0\notin\operatorname{conv}\Sp(h),
\end{equation}
where each torus is equipped with normalized Haar measure.  The cases $l=0$ and $k=0$ were known, while M\"uger and Tuset recorded that, to the best of their knowledge, every genuinely mixed case was open, including $k=l=1$ \cite[Remark~2.4]{MugerTuset}.  They also proved that the case $(k,l)=(2,1)$ would imply the Mathieu conjecture for $SU(2)$ \cite[Theorem~2.5]{MugerTuset}.

Mathieu proved that his conjecture for all compact connected Lie groups implies Keller's Jacobian Conjecture \cite{Mathieu,Keller}.  On July 20, 2026, Levent Alp\"oge announced an explicit polynomial map in three variables with constant nonzero Jacobian determinant and a fiber containing three distinct points \cite{Alpoge}.  The determinant and collision identities are direct algebraic checks, so the map disproves the Jacobian Conjecture in dimension three and, by adjoining identity coordinates, in every dimension at least three.  It follows that Mathieu's conjecture cannot hold for all compact connected Lie groups.  This argument does not determine whether the conjecture holds for $SU(2)$, nor does it decide the separate $xz(1,1)$ problem.  No part of the proofs below depends on the Jacobian counterexample.  The examples below settle both questions directly.

Our basic Laurent polynomial is
\begin{equation}\label{eq:basic-f}
  f(x,z)
  =(1-z^{-1})\bigl((1-x)+xz\bigr)
  =xz+(1-2x)-(1-x)z^{-1}.
\end{equation}
Its spectrum is the three-point affine circuit
\[
  \Sp(f)=\{-1,0,1\},
\]
so $0\in\operatorname{conv}\Sp(f)$.  Nevertheless, all pure moments vanish.  More strongly, the fixed multiplier $z^{-1}$ gives a nonzero mixed moment for every positive exponent.

The mechanism is the elementary beta-binomial identity
\begin{equation}\label{eq:beta-binomial-intro}
  \binom nk\int_0^1x^k(1-x)^{n-k}\,dx=\frac1{n+1}.
\end{equation}
It makes every Bernstein stratum contribute with the same normalized weight; the remaining coefficients form an alternating binomial row and cancel exactly.

\section{A counterexample to \texorpdfstring{$xz(1,1)$}{xz(1,1)}}

Equip $\T=\{z\in\C:|z|=1\}$ with normalized Haar measure
\[
  d^\times z=\frac{dz}{2\pi iz}.
\]
For $h\in\C[x,z,z^{-1}]$, put
\begin{equation}\label{eq:I-def}
  \I(h)
  =\int_0^1\int_{\T}h(x,z)\,d^\times z\,dx
  =\int_0^1\CT_z h(x,z)\,dx.
\end{equation}

\begin{theorem}\label{thm:basic-xz}
Let $f$ be the polynomial in \eqref{eq:basic-f}.  Then, for every integer $n\geq1$,
\begin{equation}\label{eq:basic-moments}
  \I(f^n)=0,
  \qquad
  \I(z^{-1}f^n)=\frac{(-1)^{n-1}}{n+1}.
\end{equation}
Consequently, the $xz$-conjecture is false for $(k,l)=(1,1)$, and $\ker\I$ is not a Mathieu--Zhao subspace of $\C[x,z,z^{-1}]$.
\end{theorem}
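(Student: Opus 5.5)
Since $f=(1-z^{-1})\bigl((1-x)+xz\bigr)$ is a product, the plan is to expand $f^n=(1-z^{-1})^n\bigl((1-x)+xz\bigr)^n$ with the binomial theorem in each factor. This gives
\[
  f^n=\sum_{j=0}^n\sum_{k=0}^n(-1)^j\binom nj\binom nk x^k(1-x)^{n-k}z^{k-j}.
\]
Taking $\CT_z$ of $z^{-r}f^n$ for $r\in\{0,1\}$ keeps only the terms with $k-j=r$:
\[
  \CT_z\bigl(z^{-r}f^n\bigr)=\sum_{k}(-1)^{k-r}\binom n{k-r}\binom nk x^k(1-x)^{n-k}.
\]
Next I integrate over $[0,1]$ using the beta-binomial identity \eqref{eq:beta-binomial-intro}, which says $\binom nk\int_0^1x^k(1-x)^{n-k}dx=1/(n+1)$. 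Every Bernstein term then carries the same weight, and
\[
  \I(z^{-r}f^n)=\frac{1}{n+1}\sum_{k=r}^{n}(-1)^{k-r}\binom n{k-r}.
\]

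For $r=0$ the sum is $\sum_{k=0}^n(-1)^k\binom nk=(1-1)^n=0$ whenever $n\ge1$. For $r=1$, substitute $j=k-1$, which runs from $0$ to $n-1$. The sum becomes $\sum_{j=0}^{n-1}(-1)^j\binom nj$, the full alternating row with its last term $(-1)^n$ removed. That equals $0-(-1)^n=(-1)^{n-1}$, which gives \eqref{eq:basic-moments}. The beta integral itself is just $B(k+1,n-k+1)=k!(n-k)!/(n+1)!$.

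For the consequences, note that $\Sp(f)=\{-1,0,1\}$ contains $0$, so $0\in\operatorname{conv}\Sp(f)$ even though every pure moment $\I(f^n)$ vanishes; this contradicts \eqref{eq:xz-conjecture} for $(k,l)=(1,1)$. For the Mathieu--Zhao claim I use the standard definition: a subspace $M$ is Mathieu--Zhao if, whenever $f^m\in M$ for all $m\ge1$, then for every $g$ we have $gf^m\in M$ for all large $m$. Here $f^m\in\ker\I$ for all $m\ge1$, but $z^{-1}f^m\notin\ker\I$ for every $m$. So $\ker\I$ fails the condition.

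The only step needing care is the index bookkeeping when extracting constant terms, in particular the range of $k$ for $r=1$. The rest is mechanical, so I expect no real obstacle.
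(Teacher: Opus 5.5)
Your proof is correct and follows essentially the same route as the paper: binomial expansion of both factors, the beta-binomial identity $\binom nk\int_0^1x^k(1-x)^{n-k}\,dx=\frac1{n+1}$, and the full versus truncated alternating binomial row. The only difference is the order of operations. The paper integrates in $x$ first to get the geometric sum $\frac1{n+1}\sum_{k=0}^n z^k$ and then extracts the constant term, whereas you extract the constant term first and then integrate term by term.
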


\begin{proof}
By the binomial theorem,
\[
  \bigl((1-x)+xz\bigr)^n
  =\sum_{k=0}^n\binom nk x^k(1-x)^{n-k}z^k.
\]
The beta integral gives
\[
  \int_0^1x^k(1-x)^{n-k}\,dx
  =\frac{k!(n-k)!}{(n+1)!}
  =\frac{1}{(n+1)\binom nk}.
\]
Therefore
\begin{equation}\label{eq:bernstein-average}
  \int_0^1\bigl((1-x)+xz\bigr)^n\,dx
  =\frac1{n+1}\sum_{k=0}^n z^k.
\end{equation}
Using \eqref{eq:basic-f}, \eqref{eq:I-def}, and \eqref{eq:bernstein-average}, we obtain
\begin{align*}
  \I(f^n)
  &=\frac1{n+1}\CT_z
    \left[
      (1-z^{-1})^n\sum_{k=0}^n z^k
    \right]\\
  &=\frac1{n+1}\CT_z
    \left[
      \left(\sum_{j=0}^n(-1)^j\binom nj z^{-j}\right)
      \left(\sum_{k=0}^n z^k\right)
    \right].
\end{align*}
A constant term occurs exactly when $j=k$, so
\[
  \I(f^n)
  =\frac1{n+1}\sum_{j=0}^n(-1)^j\binom nj
  =\frac{(1-1)^n}{n+1}=0.
\]

For the mixed moment, the constant-term condition in
\[
  z^{-1}(1-z^{-1})^n\sum_{k=0}^n z^k
\]
is $-1-j+k=0$, or $k=j+1$.  Hence
\begin{align*}
  \I(z^{-1}f^n)
  &=\frac1{n+1}\sum_{j=0}^{n-1}(-1)^j\binom nj\\
  &=\frac{(-1)^{n-1}}{n+1},
\end{align*}
where the last equality follows by subtracting the final term from
$\sum_{j=0}^n(-1)^j\binom nj=0$.

Since $0\in\operatorname{conv}\Sp(f)$ while all pure moments vanish, \eqref{eq:xz-conjecture} fails for $(1,1)$.  Finally, $f^n\in\ker\I$ for all $n\geq1$, but $z^{-1}f^n\notin\ker\I$ for every $n\geq1$.  This is exactly the failure of the Mathieu--Zhao property.
\end{proof}

\begin{corollary}\label{cor:all-mixed-xz}
The $xz$-conjecture is false for every pair $(k,l)$ with $k\geq1$ and $l\geq1$.  For every such pair, the kernel of the corresponding product-integration functional is not a Mathieu--Zhao subspace.
\end{corollary}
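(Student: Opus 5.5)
We pad the example of Theorem~\ref{thm:basic-xz} with dummy variables. Fix $k\geq1$ and $l\geq1$, and regard $f(x_1,z_1)$ from \eqref{eq:basic-f} as an element of $\C[x_1,\dots,x_l,z_1^{\pm1},\dots,z_k^{\pm1}]$ that does not involve the other variables. Write $\I_{k,l}$ for the product-integration functional over $[0,1]^l\times\T^k$, with normalized Haar measure on each torus and Lebesgue measure on the cube.

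The key step is a Fubini computation. For any $h$ depending only on $(x_1,z_1)$ we have $\I_{k,l}(h)=\I(h)$. This holds because $h$ is independent of $x_2,\dots,x_l$ and $z_2,\dots,z_k$, so integrating over each unit interval $[0,1]$ and each torus with normalized Haar measure multiplies by $1$. Equivalently, $\CT$ in the extra $z_i$ of a function constant in them returns the function itself.

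Applying this to $h=f^n$ and $h=z_1^{-1}f^n$ and invoking \eqref{eq:basic-moments}, we get
\[
\I_{k,l}(f^n)=0,\qquad \I_{k,l}(z_1^{-1}f^n)=\frac{(-1)^{n-1}}{n+1}\neq0\qquad(n\geq1).
\]

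It remains to check the spectrum. Now $\Sp(f)\subset\mathbb Z^k$ is $\{-e_1,0,e_1\}$, where $e_1$ is the first standard basis vector. This set contains $0$, so $0\in\operatorname{conv}\Sp(f)$, and \eqref{eq:xz-conjecture} fails for $(k,l)$. The two displayed moment identities show that $f^n\in\ker\I_{k,l}$ for all $n\geq1$ while $z_1^{-1}f^n\notin\ker\I_{k,l}$ for every $n$. This violates the Mathieu--Zhao property exactly as in the proof of Theorem~\ref{thm:basic-xz}.

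There is no real obstacle here. The only point needing care is that the measures are normalized, so the padding factors are exactly $1$ and not some other constant. Even a nonzero constant factor would not affect the conclusion.
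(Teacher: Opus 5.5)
Your proposal is correct and follows the same approach as the paper: pad $f(x_1,z_1)$ with unused variables, note that the normalized product integrals reduce to $\I$ by Fubini, and use the multiplier $z_1^{-1}$. You also spell out the spectrum $\{-e_1,0,e_1\}$ and why the padding factor is exactly $1$, both of which the paper leaves implicit.
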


\begin{proof}
For larger $k$ and $l$, use the same polynomial $f(x_1,z_1)$ and ignore the remaining variables.  The product integrals reduce to those in Theorem~\ref{thm:basic-xz}, and the multiplier $z_1^{-1}$ gives the same failure of eventual mixed-moment vanishing.
\end{proof}

\begin{remark}[The generating function is constant]\label{rem:generating}
For a Laurent trinomial $az^{-1}+b+cz$, one has the formal identity
\[
  \CT_z\frac{1}{1-t(az^{-1}+b+cz)}
  =\frac{1}{\sqrt{(1-bt)^2-4act^2}},
\]
with the square root having constant term $1$.  For the polynomial $f$ in \eqref{eq:basic-f},
\[
  a=-(1-x),\qquad b=1-2x,\qquad c=x,
\]
and the radicand becomes
\[
  (1-t(1-2x))^2+4t^2x(1-x)
  =(1-t)^2+4tx.
\]
Consequently, as a formal power series, or analytically for $|t|$ sufficiently small,
\begin{align*}
  1+\sum_{n\geq1}\I(f^n)t^n
  &=\int_0^1\frac{dx}{\sqrt{(1-t)^2+4tx}}\\
  &=\frac{\sqrt{(1+t)^2}-\sqrt{(1-t)^2}}{2t}=1.
\end{align*}
Thus the moment generating function is identically constant; the vanishing is not merely asymptotic or sporadic.
\end{remark}

\section{A family of circuit counterexamples}

The preceding example belongs to a simple family supported on symmetric three-point circuits.

\begin{proposition}\label{prop:family}
Let $d\in\mathbb Z_{\geq1}$ and let $\lambda,\mu\in\C^\times$.  Define
\begin{equation}\label{eq:family}
  f_{d,\lambda,\mu}(x,z)
  =\lambda(1-\mu z^{-d})
   \bigl((1-x)+x\mu^{-1}z^d\bigr).
\end{equation}
Then
\[
  \Sp(f_{d,\lambda,\mu})=\{-d,0,d\},
\]
and, for every $n\geq1$,
\begin{align}
  \I(f_{d,\lambda,\mu}^n)&=0,
  \label{eq:family-pure}\\
  \I(z^{-d}f_{d,\lambda,\mu}^n)
  &=\frac{(-1)^{n-1}\lambda^n\mu^{-1}}{n+1}.
  \label{eq:family-mixed}
\end{align}
\end{proposition}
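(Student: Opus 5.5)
The plan is to reduce Proposition~\ref{prop:family} to the computation in the proof of Theorem~\ref{thm:basic-xz}. We substitute $w=\mu^{-1}z^d$ and factor out $\lambda$.

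First, expand \eqref{eq:family}:
\[
f_{d,\lambda,\mu}=\lambda\bigl(\mu^{-1}x z^d+(1-2x)-\mu(1-x)z^{-d}\bigr).
\]
The three coefficient polynomials $\lambda\mu^{-1}x$, $\lambda(1-2x)$ and $-\lambda\mu(1-x)$ are all nonzero in $\C[x]$ because $\lambda,\mu\neq0$. This gives $\Sp(f_{d,\lambda,\mu})=\{-d,0,d\}$ directly.

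For the moments, write
\[
f_{d,\lambda,\mu}^n=\lambda^n(1-\mu z^{-d})^n\bigl((1-x)+x\mu^{-1}z^d\bigr)^n.
\]
Apply \eqref{eq:bernstein-average} with $z$ replaced by $\mu^{-1}z^d$. This is legitimate because that identity is a polynomial identity in its variable, coming from the binomial expansion and the beta integral. It gives
\[
\int_0^1 f_{d,\lambda,\mu}^n\,dx=\frac{\lambda^n}{n+1}\Bigl(\sum_{j=0}^n(-1)^j\binom nj\mu^j z^{-dj}\Bigr)\Bigl(\sum_{k=0}^n\mu^{-k}z^{dk}\Bigr).
\]
Integration in $x$ commutes with $\CT_z$, since everything is a finite sum. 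We then take constant terms:
\begin{itemize}
\item For the pure moment, the constant term arises exactly when $j=k$. The powers of $\mu$ cancel ($\mu^j\mu^{-k}=1$), which leaves $\frac{\lambda^n}{n+1}\sum_{j=0}^n(-1)^j\binom nj=0$ and proves \eqref{eq:family-pure}.
\item For the mixed moment with the multiplier $z^{-d}$, the condition is $-d-dj+dk=0$, that is $k=j+1$, using $d\ge1$. Each term then carries $\mu^{j}\mu^{-(j+1)}=\mu^{-1}$. The sum is $\frac{\lambda^n\mu^{-1}}{n+1}\sum_{j=0}^{n-1}(-1)^j\binom nj$, and by the same subtraction as in Theorem~\ref{thm:basic-xz} this equals $\frac{(-1)^{n-1}\lambda^n\mu^{-1}}{n+1}$. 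This proves \eqref{eq:family-mixed}.
\end{itemize}

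There is no real obstacle. The only points needing care are, first, checking that the factors of $\mu$ collapse uniformly in each constant-term selection. Second, we must confirm that for $d\ge1$ only exponents divisible by $d$ appear, so the constant-term conditions are exactly $j=k$ and $k=j+1$, with no extra coincidences.
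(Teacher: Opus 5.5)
Your proposal is correct and follows the paper's proof essentially step for step. You expand to read off the spectrum, apply the Bernstein--beta average with $z$ replaced by $\mu^{-1}z^d$, and extract constant terms via $j=k$ and $k=j+1$, where the powers of $\mu$ collapse to $1$ and $\mu^{-1}$ respectively.
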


\begin{proof}
Expanding \eqref{eq:family} gives
\[
  f_{d,\lambda,\mu}
  =\lambda\mu^{-1}xz^d
   +\lambda(1-2x)
   -\lambda\mu(1-x)z^{-d},
\]
so its spectrum is $\{-d,0,d\}$.  The same beta calculation as in
\eqref{eq:bernstein-average} gives
\[
  \int_0^1\bigl((1-x)+x\mu^{-1}z^d\bigr)^n\,dx
  =\frac1{n+1}\sum_{k=0}^n\mu^{-k}z^{dk}.
\]
Multiplication by
\[
  \lambda^n(1-\mu z^{-d})^n
  =\lambda^n\sum_{j=0}^n(-1)^j\binom nj\mu^jz^{-dj}
\]
shows that the pure constant term is
\[
  \frac{\lambda^n}{n+1}
  \sum_{j=0}^n(-1)^j\binom nj=0.
\]
After multiplication by $z^{-d}$, the constant-term condition is again $k=j+1$, and the powers of $\mu$ contribute
$\mu^j\mu^{-(j+1)}=\mu^{-1}$.  Thus
\[
  \I(z^{-d}f_{d,\lambda,\mu}^n)
  =\frac{\lambda^n\mu^{-1}}{n+1}
   \sum_{j=0}^{n-1}(-1)^j\binom nj,
\]
which is \eqref{eq:family-mixed}.
\end{proof}

\section{The lift to \texorpdfstring{$SU(2)$}{SU(2)}}

Write an element of $SU(2)$ and its coordinate functions as
\[
  g=\begin{pmatrix}a&c\\ b&d\end{pmatrix}.
\]
Thus $c=-\overline b$, $d=\overline a$, and $ad-bc=1$ on the group.
A regular function on $SU(2)$ is the restriction of a polynomial in
$\C[a,b,c,d]$; such functions are $SU(2)$-finite.  We use normalized Haar measure $dg$.

M\"uger and Tuset prove the following integration formula \cite[Lemma~2.1]{MugerTuset}.  For completeness, we include a monomial verification.  Define
\begin{equation}\label{eq:beta-map}
  \beta(z_1,z_2,x)
  =\bigl((1-x)z_2,\;xz_1,\;-z_1^{-1},\;z_2^{-1}\bigr).
\end{equation}

\begin{lemma}[The $SU(2)$ integration formula]\label{lem:su2-integration}
For every $P\in\C[a,b,c,d]$,
\begin{equation}\label{eq:su2-integration}
  \int_{SU(2)}P(a,b,c,d)\,dg
  =\int_0^1\int_{\T^2}
    P\bigl(\beta(z_1,z_2,x)\bigr)
    \frac{dz_1}{2\pi iz_1}\frac{dz_2}{2\pi iz_2}\,dx.
\end{equation}
\end{lemma}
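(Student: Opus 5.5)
By linearity it suffices to verify \eqref{eq:su2-integration} on monomials $P=a^{p}b^{q}c^{r}d^{s}$, where $p,q,r,s\geq0$. The plan is to evaluate both sides in closed form and compare them.

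\textbf{Right-hand side.} Substituting \eqref{eq:beta-map} gives
\[
  (1-x)^p x^q(-1)^r\, z_1^{\,q-r} z_2^{\,p-s}.
\]
The torus integrals kill this unless $q=r$ and $p=s$. In that case the $x$-integral is a beta integral, and the right-hand side equals
\[
  (-1)^q\,\frac{p!\,q!}{(p+q+1)!}.
\]

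\textbf{Left-hand side: vanishing.} I would use invariance of Haar measure under the diagonal torus. Left and right multiplication by $\operatorname{diag}(e^{i\theta},e^{-i\theta})$ rescale the coordinates with these weights:
\begin{itemize}
  \item left multiplication: $a\mapsto e^{i\theta}a$, $c\mapsto e^{i\theta}c$, $b\mapsto e^{-i\theta}b$, $d\mapsto e^{-i\theta}d$;
  \item right multiplication: $a\mapsto e^{i\theta}a$, $b\mapsto e^{i\theta}b$, $c\mapsto e^{-i\theta}c$, $d\mapsto e^{-i\theta}d$.
\end{itemize}
So the integral of the monomial vanishes unless both weights are zero:
\[
  p+r=q+s \quad\text{and}\quad p+q=r+s.
\]
Together these force $p=s$ and $q=r$, which matches the support condition on the right.

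\textbf{Left-hand side: the surviving monomials.} On the group $d=\overline a$ and $c=-\overline b$, so the remaining integrand is
\[
  (-1)^q\,|a|^{2p}|b|^{2q}.
\]
I would identify $SU(2)$ with $S^3\subset\C^2$ via $(a,b)$, with normalized surface measure. The needed moments are
\[
  \int_{S^3}|a|^{2p}|b|^{2q}\,d\sigma=\frac{p!\,q!}{(p+q+1)!}.
\]
This follows from the Gaussian trick: integrate $|a|^{2p}|b|^{2q}e^{-|a|^2-|b|^2}$ over $\C^2$ in two ways. In Cartesian form it factors as $\pi^2\,p!\,q!$. In polar form it equals $\operatorname{vol}(S^3)\cdot\tfrac12(p+q+1)!\cdot\langle |a|^{2p}|b|^{2q}\rangle_{S^3}$. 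Normalizing against the case $p=q=0$ removes the constant $\operatorname{vol}(S^3)=2\pi^2$ and gives the formula above.

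An alternative is to use Hopf-type coordinates $|a|^2=1-x$, $|b|^2=x$, in which normalized measure on $S^3$ becomes $dx$ times the uniform measure on two circles. This recovers the parametrization $\beta$ directly and is essentially the content of the lemma.

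\textbf{Main obstacle.} The only step needing care is the claim that Haar measure on $SU(2)$ corresponds to the normalized rotation-invariant measure on $S^3$. This holds because left multiplication acts on $(a,b)$ by unitary transformations of $\C^2$ and transitively on $S^3$, so uniqueness of the invariant probability measure applies. With that in place, the two sides agree on every monomial, which proves the lemma.
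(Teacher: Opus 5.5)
Your proof is correct. It shares the paper's outer structure: reduce to monomials, then compute both sides in closed form, with the right-hand side handled by the torus integrals followed by a beta integral. You evaluate the left-hand side by a genuinely different route, though.

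The paper invokes the ``standard polar description'' of normalized Haar measure, $a=\sqrt{1-x}\,z_2$, $b=\sqrt{x}\,z_1$, with $x$ uniform and $z_1,z_2$ independent Haar variables. It reads off both the support condition $r=u$, $s=t$ and the value $r!\,s!/(r+s+1)!$ directly from that description. As you yourself note, that description is essentially the content of the lemma, so the paper's verification amounts to citing a known fact.

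You instead derive the support condition from bi-invariance of Haar measure under the diagonal torus. Your weight conditions $p+r=q+s$ and $p+q=r+s$ are right and force $p=s$, $q=r$. You then obtain the surviving moments from three ingredients:
\begin{itemize}
  \item the identification $g\mapsto(a,b)$ of $SU(2)$ with $S^3$;
  \item uniqueness of the $SU(2)$-invariant probability measure on $S^3$;
  \item the Gaussian computation. Its constants check out: $\pi^2\,p!\,q!$ against $\operatorname{vol}(S^3)\cdot\tfrac12(p+q+1)!$, with $\operatorname{vol}(S^3)=2\pi^2$.
\end{itemize}
Your argument is longer but self-contained, resting only on uniqueness of invariant measures and elementary integrals. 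It also explains the vanishing of off-diagonal monomials structurally, via torus weights. The paper's argument is shorter but presupposes the polar form of Haar measure, which is exactly the point one would want justified.
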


\begin{proof}
By linearity it is enough to take
\[
  P=a^r b^s c^t d^u,
  \qquad r,s,t,u\in\mathbb Z_{\geq0}.
\]
The standard polar description of normalized Haar measure on $SU(2)$ writes
\[
  a=\sqrt{1-x}\,z_2,
  \quad b=\sqrt{x}\,z_1,
  \quad c=-\sqrt{x}\,z_1^{-1},
  \quad d=\sqrt{1-x}\,z_2^{-1},
\]
where $x$ is uniformly distributed on $[0,1]$ and $z_1,z_2$ are independently Haar distributed on $\T$.  The two torus integrals therefore force $r=u$ and $s=t$, and the remaining beta integral gives
\begin{equation}\label{eq:su2-monomial}
  \int_{SU(2)}a^r b^s c^t d^u\,dg
  =(-1)^s\delta_{r,u}\delta_{s,t}
    \frac{r!\,s!}{(r+s+1)!}.
\end{equation}
On the right-hand side of \eqref{eq:su2-integration}, the same monomial becomes
\[
  (-1)^t(1-x)^r x^s z_1^{s-t}z_2^{r-u}.
\]
The torus integrals again force $r=u$ and $s=t$, after which
\[
  \int_0^1(1-x)^r x^s\,dx
  =\frac{r!\,s!}{(r+s+1)!}.
\]
Thus both sides agree with \eqref{eq:su2-monomial}, proving the formula.
\end{proof}

\begin{theorem}\label{thm:su2}
Let $\lambda,\mu\in\C^\times$ and define regular functions on $SU(2)$ by
\begin{equation}\label{eq:FG-family}
  F_{\lambda,\mu}
  =\lambda(1+\mu c)(ad+\mu^{-1}b),
  \qquad
  G=-c.
\end{equation}
Then, for every integer $n\geq1$,
\begin{align}
  \int_{SU(2)}F_{\lambda,\mu}(g)^n\,dg&=0,
  \label{eq:su2-pure}\\
  \int_{SU(2)}F_{\lambda,\mu}(g)^nG(g)\,dg
  &=\frac{(-1)^{n-1}\lambda^n\mu^{-1}}{n+1}\neq0.
  \label{eq:su2-mixed}
\end{align}
In particular, the Mathieu conjecture for $SU(2)$ is false.
\end{theorem}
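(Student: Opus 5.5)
Our plan is to push $F_{\lambda,\mu}$ and $G$ through the substitution $\beta$ of Lemma~\ref{lem:su2-integration} and check that they become the one-variable family of Proposition~\ref{prop:family} (with $d=1$), with no dependence on $z_2$. Under $\beta(z_1,z_2,x)$ we have
\[
  a\mapsto(1-x)z_2,\quad b\mapsto xz_1,\quad c\mapsto -z_1^{-1},\quad d\mapsto z_2^{-1}.
\]
Hence $ad\mapsto 1-x$, $b\mapsto xz_1$, and $c\mapsto -z_1^{-1}$. This gives
\[
  F_{\lambda,\mu}\circ\beta=\lambda(1-\mu z_1^{-1})\bigl((1-x)+x\mu^{-1}z_1\bigr)=f_{1,\lambda,\mu}(x,z_1),
  \qquad
  G\circ\beta=z_1^{-1}.
\]
The key point is that $ad$, rather than the group identity $ad-bc=1$, is what removes $z_2$. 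The substitution $\beta$ already satisfies $ad=1-x$ exactly, so no dependence on $z_2$ survives.

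Since $\beta$ is a substitution into polynomials, it commutes with products. So $F_{\lambda,\mu}^n\circ\beta=f_{1,\lambda,\mu}^n$ and $(F_{\lambda,\mu}^nG)\circ\beta=z_1^{-1}f_{1,\lambda,\mu}^n$. Apply Lemma~\ref{lem:su2-integration} to the polynomials $P=F_{\lambda,\mu}^n$ and $P=F_{\lambda,\mu}^nG$ in $\C[a,b,c,d]$. The $z_2$-integral is then trivial because the integrand is independent of $z_2$ and the measure is normalized. This reduces both Haar integrals to $\I(f_{1,\lambda,\mu}^n)$ and $\I(z^{-1}f_{1,\lambda,\mu}^n)$ respectively. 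Proposition~\ref{prop:family} gives $0$ and $(-1)^{n-1}\lambda^n\mu^{-1}/(n+1)$, which is nonzero since $\lambda,\mu\neq0$.

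For the final claim, we note that $F_{\lambda,\mu}$ and $G$ are restrictions of polynomials, hence $SU(2)$-finite. All pure moments of $F_{\lambda,\mu}$ vanish, but $\int F_{\lambda,\mu}^nG\,dg\neq0$ for every $n\geq1$, not merely for infinitely many $n$. This contradicts Mathieu's conclusion for $SU(2)$.

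The only step needing care is the substitution bookkeeping. One must confirm that the coordinate conventions ($b$ in the lower-left entry, $c=-\bar b$) match $\beta$ in \eqref{eq:beta-map}, so that $c\mapsto -z_1^{-1}$ and hence $1+\mu c\mapsto 1-\mu z_1^{-1}$, with the sign landing correctly. Everything else is a direct appeal to Lemma~\ref{lem:su2-integration} and Proposition~\ref{prop:family}.
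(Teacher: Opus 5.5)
Your proposal is correct and follows the paper's proof essentially verbatim. You substitute via $\beta$, observe that $F_{\lambda,\mu}$ and $G$ become $f_{1,\lambda,\mu}(x,z_1)$ and $z_1^{-1}$ with no $z_2$-dependence, and then invoke Lemma~\ref{lem:su2-integration} together with Proposition~\ref{prop:family}; your sign bookkeeping ($c\mapsto -z_1^{-1}$, hence $1+\mu c\mapsto 1-\mu z_1^{-1}$ and $G\mapsto z_1^{-1}$) checks out.
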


\begin{proof}
Under the substitution \eqref{eq:beta-map},
\[
  1+\mu c\longmapsto1-\mu z_1^{-1},
  \qquad
  ad+\mu^{-1}b
  \longmapsto(1-x)+x\mu^{-1}z_1,
\]
and
\[
  G=-c\longmapsto z_1^{-1}.
\]
Thus $F_{\lambda,\mu}$ maps precisely to the function
$f_{1,\lambda,\mu}(x,z_1)$ of Proposition~\ref{prop:family}, while $G$ maps to $z_1^{-1}$.  These expressions are independent of $z_2$, so integration over the second torus contributes the factor $1$.  Equations \eqref{eq:su2-pure} and \eqref{eq:su2-mixed} now follow from \eqref{eq:su2-integration}, \eqref{eq:family-pure}, and \eqref{eq:family-mixed}.

The hypothesis in Mathieu's conjecture holds for $F_{\lambda,\mu}$, but its predicted conclusion fails for the fixed regular function $G$ at every positive exponent.  Hence the conjecture is false for $SU(2)$.
\end{proof}

\begin{remark}\label{rem:one-way-jacobian}
Mathieu's implication from his compact-group conjecture to the Jacobian Conjecture is one-way.  Consequently, Theorem~\ref{thm:su2} does not by itself imply that the two-dimensional Jacobian Conjecture is false.
\end{remark}

Taking $\lambda=\mu=1$ gives the particularly small pair announced in the abstract:
\[
  F=(1+c)(ad+b),
  \qquad
  G=-c.
\]
The failure of eventual vanishing is maximal in the sense that the mixed moment is nonzero for every $n\geq1$.

\section{The cancellation mechanism}

The counterexamples arise from an exact interaction between the binomial expansion and the beta integral.  For each $0\leq k\leq n$,
\[
  \binom nk
  \int_0^1x^k(1-x)^{n-k}\,dx
  =\frac1{n+1}.
\]
Thus integrating the Bernstein expansion of $((1-x)+xz)^n$ removes the binomial coefficient completely and leaves the geometric sum
\[
  \frac1{n+1}(1+z+\cdots+z^n).
\]
The factor $(1-z^{-1})^n$ then contributes the alternating binomial row.  The pure moment is the full sum $(1-1)^n$, while multiplication by $z^{-1}$ removes the final term and leaves $(-1)^{n-1}$.  This explains both the all-moment cancellation and the persistent mixed moments without any limiting argument.

The examples were found during work on the Gaussian Moments Conjecture of Derksen, van den Essen, and Zhao \cite{DVEZ}.  The contrast is instructive: Gaussian radial moments carry factorial weights, whereas the beta integral in \eqref{eq:beta-binomial-intro} exactly cancels the multinomial coefficient and permits the alternating cancellation above.  No result concerning Gaussian moments is needed for the proofs in this paper.

\section*{AI provenance and author responsibility}

The $xz(1,1)$ counterexample and its lift to $SU(2)$ were discovered by ChatGPT 5.6 Sol Pro during an interactive research session with the author exploring the Gaussian Moments Conjecture.  The system also supplied the beta-binomial proof, identified the family in Proposition~\ref{prop:family}, and assisted with drafting and algebraic checks.  The author independently verified the displayed identities and bears full responsibility for the mathematics, the exposition, and every claim in this manuscript.

\end{document}